\documentclass[12pt]{amsart}

\usepackage{amsmath,amssymb,amsthm,enumerate,color}

\usepackage[T1]{fontenc}
\usepackage[utf8]{inputenc}
\usepackage{lmodern}
\usepackage[margin=1.04in]{geometry}
\usepackage{amsmath,amssymb,amsthm,mathtools}
\usepackage{enumitem}
\usepackage{xcolor}
\usepackage[hidelinks]{hyperref}

\usepackage{needspace}

\allowdisplaybreaks

\definecolor{darkblue}{RGB}{22,55,105}
\hypersetup{
  colorlinks=true,
  linkcolor=darkblue,
  citecolor=darkblue,
  urlcolor=darkblue
}

\usepackage[capitalise]{cleveref}
\crefname{theorem}{Theorem}{Theorems}
\crefname{lemma}{Lemma}{Lemmas}
\crefname{corollary}{Corollary}{Corollaries}
\crefname{proposition}{Proposition}{Propositions}
\crefname{conjecture}{Conjecture}{Conjectures}
\crefname{question}{Question}{Questions}
\crefname{definition}{Definition}{Definitions}
\crefname{example}{Example}{Examples}
\crefname{remark}{Remark}{Remarks}
\crefname{enumi}{}{}
\crefname{equation}{}{}

\newtheorem{theorem}{Theorem}[section]
\newtheorem*{theorem*}{Theorem}

\newtheorem*{conjecture*}{Conjecture}
\newtheorem*{tconjecture*}{The Trautman Conjecture}
\newtheorem{corollary}[theorem]{Corollary}
\newtheorem*{corollary*}{Corollary}
\newtheorem{lemma}[theorem]{Lemma}
\newtheorem*{lemma*}{Lemma}
\newtheorem{proposition}[theorem]{Proposition}
\newtheorem*{proposition*}{Proposition}

\theoremstyle{definition}

\theoremstyle{remark}
\newtheorem{remark}[theorem]{Remark}

\newcommand{\beq}{\begin{equation}}
\newcommand{\eeq}{\end{equation}}

\def\sideremark#1{\ifvmode\leavevmode\fi\vadjust{\vbox to0pt{\vss
 \hbox to 0pt{\hskip\hsize\hskip1em
 \vbox{\hsize2.7cm\tiny\raggedright\pretolerance10000
  \noindent #1\hfill}\hss}\vbox to8pt{\vfil}\vss}}}

\newcommand{\C}{\mathbb C}
\newcommand{\R}{\mathbb R}
\newcommand{\ii}{\mathrm i}

\newcommand{\SpanC}{\operatorname{span}_{\C}}
\DeclareMathOperator{\diver}{div}
\DeclareMathOperator{\supp}{supp}

\begin{document}
\title[]{A Smooth Counterexample to\\ the Trautman Conjecture}
\author{Sean N. Curry}
\address{Department of Mathematics, Oklahoma State University, Stillwater, OK 74078-5061}
\email{sean.curry@okstate.edu}


\begin{abstract}
The Trautman conjecture asserted that a smooth three-dimensional CR manifold admitting a nowhere-zero closed section of its canonical bundle must be locally embeddable in $\C^2$.  We modify a standard construction of nonembeddable smooth strongly pseudoconvex CR 3-manifolds so that the condition of having a nowhere-zero closed section of the canonical bundle is preserved, thus providing a strongly pseudoconvex counterexample to the Trautman conjecture.  
\end{abstract}

\maketitle

\section{Introduction and main results}\label{sec:introduction}

Let $M$ be a smooth $3$-dimensional CR manifold with CR line bundle
$T^{0,1}\subset\C TM$.  Its \emph{canonical bundle} is the complex line bundle
\begin{equation*}
 K=\{\,\Omega\in\C\Lambda^2T^*M:
       \iota_L\Omega=0\text{ for every }L\in T^{0,1}\,\}.
\end{equation*}

This paper constructs a counterexample to the following well-known conjecture:

\begin{tconjecture*}[\cite{Trautman1999}]
If a smooth CR
$3$-manifold admits a nowhere-zero closed section of the canonical bundle, then it is
locally CR embeddable in $\C^2$.
\end{tconjecture*}

\subsection{Origins of the Conjecture}
The Trautman conjecture arose naturally out of the work of Robinson, Trautman and many others on exact solutions of the Maxwell and Einstein equations via dimensional reduction, going back to the late 1950s and early 1960s \cite{Robinson1959,Robinson1961,RobinsonTrautman1960,
RobinsonTrautman1962}. The link to Cauchy--Riemann (CR) geometry was provided by the key observation that a $4$-dimensional Lorentzian manifold admitting a shear-free congruence of null geodesics induces a CR structure on the local $3$-dimensional leaf space of this congruence \cite{Nurowski1996,RobinsonTrautman1986,Trautman2002}. 
This CR structure is strongly pseudoconvex when the congruence is \emph{twisting} (which is, locally, a generic condition). Under suitable hypotheses, the Maxwell and Einstein equations can then be reinterpreted in terms of the Cauchy--Riemann operator on the underlying CR $3$-manifold \cite{Trautman1999,HillLewandowskiNurowski2008,LewandowskiNurowskiTafel1990,LewandowskiNurowskiTafel1991,SchmalzGanji2019}. 
When this CR $3$-manifold is taken to be embedded in $\mathbb{C}^2$, the prevalence of CR functions (and closed sections of the canonical bundle) coming from ambient holomorphic functions allows for the construction of many explicit solutions. This framework is natural in the sense that every algebraically special vacuum Einstein spacetime admits such a geodesic congruence (the Goldberg-Sachs theorem \cite{GoldbergSachs1962,GoldbergSachs2009}), as does every spacetime admitting a null electromagnetic field (the Robinson theorem \cite{Robinson1961,Tafel1985}). Celebrated examples of metrics which arise from shear-free null geodesic congruences in this way include the Kerr and Kerr--Newman black holes. 

The Trautman conjecture concerns a specific construction of $4$-dimensional Lorentzian manifolds admitting null Maxwell fields \cite{RobinsonTrautman1986,Trautman1999,TaghaviChabert2026}, which starts from a CR $3$-manifold together with a nowhere-zero closed section of its canonical bundle. Trautman conjectured that all such Lorentzian $4$-manifolds locally arise from the data of an embedded hypersurface in $\mathbb{C}^2$ (and a closed section of the canonical bundle obtained via the embedding). From this point of view, the counterexample construction below therefore shows that the scope for using $3$-dimensional CR manifolds to construct $4$-dimensional Lorentzian manifolds admitting null electromagnetic fields is larger than was expected.

When formulating his conjecture, Trautman was influenced by the then-current state of knowledge concerning embeddability of CR structures \cite{Trautman1999}. In particular, he noted that Rosay had constructed nonembeddable CR $3$-manifolds for which all local CR functions depend on a single CR function \cite{Rosay1989} and that, by the results of Tafel and Jacobowitz, these examples cannot carry a nowhere-zero closed section of the canonical bundle \cite{Tafel1985,Jacobowitz1987,Trautman1999}; he credited C.~Denson Hill with pointing out the relevance of Rosay's examples and Jacobowitz's results. The known $3$-dimensional examples therefore did not contradict the proposed implication. On the other hand, Jacobowitz had constructed nonembeddable CR manifolds of real dimension seven whose canonical bundles do admit nowhere-zero closed sections \cite{Jacobowitz1987}. Trautman nevertheless conjectured that the existence of a nowhere-zero closed section of the canonical bundle would imply local embeddability in the exceptional $3$-dimensional case.

\subsection{Main Results}
We now state our main result.  Let $(z,t)$ be the coordinates on  $\C\times\R$ and define
\begin{equation*}
 L_0=\partial_{\bar z}-\frac{\ii}{2}z\partial_t,
 \qquad
 w=t+\frac{\ii}{2}|z|^2. 
\end{equation*}
%
\begin{theorem}\label{thm:main}
For every neighborhood $U$ of the origin in $\C\times\R$ and every
$\varepsilon>0$, there is a nonnegative function
$\phi\in C^\infty(\C\times\R)$ supported in $U$, with $\|\phi\|_{C^2}<\varepsilon$, such that the complex line field
\begin{equation}\label{eq:Lphi-intro}
 T^{0,1}_\phi=\SpanC\{L_\phi\},
 \qquad
 L_\phi=L_0+\phi_t\partial_z-\phi_z\partial_t,
\end{equation}
has the following properties:
\begin{enumerate}[label=\textnormal{(\roman*)},leftmargin=2.4em]
\item $T^{0,1}_\phi$ is a smooth strongly pseudoconvex CR structure;
\item its canonical bundle has the nowhere-zero closed section
\begin{equation*}
 \Omega_\phi
 =\iota_{L_\phi}(dz\wedge d\bar z\wedge dt)
 =-dz\wedge dw-d(\phi\,d\bar z);
\end{equation*}
\item every $C^1$ CR function $h$ defined near the origin satisfies
$dh(0)=0$.
\end{enumerate}
In particular, $T^{0,1}_\phi$ is not locally CR embeddable at the origin.
\end{theorem}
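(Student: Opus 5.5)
The plan has three parts, of very different difficulty: (ii) is an algebraic identity, (i) follows from smallness of $\phi$, and (iii) carries the real content.

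\textbf{Parts (ii) and (i).} For (ii) I will compute $\iota_{L_\phi}(dz\wedge d\bar z\wedge dt)$ directly. The $L_0$ part gives $-dz\wedge dw$, using $dw=dt+\tfrac{\ii}{2}(\bar z\,dz+z\,d\bar z)$. The perturbation $\phi_t\partial_z-\phi_z\partial_t$ contributes $\phi_t\,d\bar z\wedge dt-\phi_z\,dz\wedge d\bar z$, which is exactly $-d(\phi\,d\bar z)$ because $\phi_{\bar z}\,d\bar z\wedge d\bar z=0$. Hence $\Omega_\phi$ is exact, so closed, and it is nowhere zero since $dz\wedge d\bar z\wedge dt$ is a volume form. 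Any interior product of a volume form with a vector in $T^{0,1}$ annihilates $T^{0,1}$, so $\Omega_\phi$ is a section of $K$. Conceptually, $L_\phi$ is $L_0$ plus a divergence-free (Hamiltonian-type) field, and that is why closedness survives.

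For (i), $L_\phi$ and $\bar L_\phi$ are independent as long as $\phi_t,\phi_z$ are small, and integrability is automatic in dimension three. The Levi form $[L_\phi,\bar L_\phi]$ mod $\SpanC\{L_\phi,\bar L_\phi\}$ is a perturbation of that of $L_0$ by terms involving at most second derivatives of $\phi$. So $\|\phi\|_{C^2}<\varepsilon$ preserves strong pseudoconvexity.

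\textbf{Part (iii): reduction to closed complex curves.} I will follow the Nirenberg/Rosay strategy. Set $\phi=\sum_k\phi_k$, where the $\phi_k\ge 0$ have disjoint supports in small regions $U_k\subset U$ shrinking to the origin. Each $\phi_k$ is designed so that the perturbed structure contains a compact surface $S_k\subset U_k$ without boundary to which $\operatorname{Re}L_\phi,\operatorname{Im}L_\phi$ are tangent. Such an $S_k$ is a compact Riemann surface inside $M$, and any $C^1$ CR function restricts to a holomorphic function on it. It is therefore constant on $S_k$ by the maximum principle.

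If the $S_k$ accumulate at $0$ and contain pairs of points $p_k,q_k$ whose difference quotients $(q_k-p_k)/|q_k-p_k|$ tend (in the coordinates $(z,t)$) to a prescribed direction $v$, then $dh(0)v=0$. Since $h$ is CR, $dh(0)$ already kills $\bar L_0(0)$, so it suffices to get enough directions: the two real directions of the contact plane $\{dt=0\}$ at $0$, plus a transverse one such as $\partial_t$. The $S_k$ are only $C^1$-close to the origin in the limit, so I will check carefully that the difference quotients converge, using $h\in C^1$.

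\textbf{Main obstacle: constructing $\phi_k$.} I would first look for $\phi_k$ invariant under the rotation $z\mapsto e^{\ii\theta}z$ and a rotation-invariant torus $S_k=\{\rho_k(|z|,t)=0\}$ around a circle $|z|=r_k$, $t=t_k$. Tangency of $L_\phi$ to $\{\rho=0\}$ means $L_0\rho+\phi_t\rho_z-\phi_z\rho_t=0$ on $S_k$, which reduces to a first-order ODE-type condition in the $(|z|,t)$ half-plane. Since $L_0\rho$ never vanishes identically on such a torus, the perturbation must be of size comparable to the geometry, and the required $\phi$ need not be $\ge0$ a priori; adding a suitable constant-on-$S_k$ bump should fix the sign.

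Smallness in $C^2$ should come from the anisotropic dilations $\delta_\lambda(z,t)=(\lambda z,\lambda^2t)$. These preserve $L_0$ up to a factor, and transport a model solution $\phi^{(1)}$ to $\lambda^2\phi^{(1)}\circ\delta_\lambda^{-1}$. This rescaled function has bounded $z$-derivatives but $t$-derivatives that blow up, so I expect to need a model built from a small-amplitude $\phi^{(1)}$, or a tilted/rotated variant. Getting a single model piece that both creates a closed leaf and has arbitrarily small weighted $C^2$ norm is the step I expect to be hardest. Once one model piece exists, summing rescaled, translated copies with rapidly decreasing $\lambda_k$ gives convergence in $C^\infty$ and finishes the proof.
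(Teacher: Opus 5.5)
Your treatment of (i) and (ii) is correct and matches the paper's \cref{prop:closed-section}: $\iota_{L_\phi}\nu=-dz\wedge dw-d(\phi\,d\bar z)$ follows by direct contraction, and strong pseudoconvexity is an open condition in $C^2$ on $\phi$.

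\textbf{The gap is in (iii), and it is fatal rather than technical.} The surfaces $S_k$ you want cannot exist in a strongly pseudoconvex structure, whatever the size of $\phi$. Suppose $\operatorname{Re}L_\phi$ and $\operatorname{Im}L_\phi$ are tangent to a surface $S$. Then so is their bracket, and hence so is $[L_\phi,\bar L_\phi]=-2\ii[\operatorname{Re}L_\phi,\operatorname{Im}L_\phi]$. But (i) says exactly that $L_\phi,\bar L_\phi,[L_\phi,\bar L_\phi]$ span $\C TM$ at every point, which is impossible on a $2$-dimensional $S$. Equivalently, $\operatorname{Re}(T^{1,0}\oplus T^{0,1})$ is a contact distribution and has no integral surfaces.

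Your rotation-invariant ansatz shows this concretely. With $s=|z|^2$, $\rho=\rho(s,t)$ and $\phi=\phi(s,t)$, one gets $L_\phi\rho=z(\rho_s-\tfrac{\ii}{2}\rho_t)+\bar z(\phi_t\rho_s-\phi_s\rho_t)$. Since $e^{\ii\vartheta}$ and $e^{-\ii\vartheta}$ are independent along each circle, vanishing on $S_k$ forces $\rho_s=\rho_t=0$ there, contradicting $d\rho\neq0$. So there is no ODE to solve. Closed complex curves can only sit where the Levi form vanishes along the whole surface, so they are incompatible with (i), which you need at the same time.

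\textbf{What is missing is an integral (averaged) obstruction, which is what the paper uses.}
\begin{enumerate}[label=\textnormal{(\arabic*)},leftmargin=2.4em]
\item Take $\phi=\delta\sum_j\phi_j$, where the $\phi_j\ge0$ are bumps in the variable $w$ supported in disjoint solid tori $D_j=w^{-1}(A_j)$ shrinking to $0$.
\item Off the tori, $zh$ and $wh$ are annihilated by $L_0$. Their fiber periods $I_\chi(w)=\oint_{\Gamma_w}\chi\,dz$ are therefore holomorphic in $w$ on the connected complement of the disks, and they tend to $0$ as the circles collapse at the real axis. By Schwarz reflection and the identity theorem they vanish identically.
\item Stokes then gives $\int_{D_j}zL_0h\,dV=\int_{D_j}wL_0h\,dV=0$.
\item Inside $D_j$ one has $L_0h=-\delta X_{\phi_j}h$. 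Two integrations by parts, using $\diver X_{\phi_j}=0$, $X_{\phi_j}z=(\phi_j)_t$ and $X_{\phi_j}w=\tfrac{\ii}{2}\bar z(\phi_j)_t-(\phi_j)_z$, turn the identities in (3) into $\int\phi_jh_t\,dV=0$ and $\int\phi_j(h_z-\tfrac{\ii}{2}\bar zh_t)\,dV=0$.
\item Dividing by $M_j=\int\phi_j\,dV>0$ and letting $j\to\infty$ gives $h_t(0)=h_z(0)=0$. The equation at the origin gives $h_{\bar z}(0)=0$.
\end{enumerate}
This argument uses only $\phi_j\ge0$ and $\phi_j\not\equiv0$, so it is insensitive to amplitude. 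The factor $e^{-1/a_j^2}$ and $\delta$ then give $C^2$-smallness and infinite-order vanishing at no cost, which removes the scaling difficulty you expected to be the hardest step.

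A minor slip: the CR equation at $0$ kills $L_0(0)=\partial_{\bar z}$, not $\bar L_0(0)$.
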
 
\begin{remark}
In \cite{Jacobowitz1987} (cf.\ \cite{Jacobowitz2020}) Jacobowitz has shown that a strongly pseudoconvex CR 3-manifold which admits a nowhere-zero closed section of the canonical bundle is locally embeddable in a neighborhood of a given point if and only if it admits a single strongly independent CR function in a neighborhood of that point. Note that while \cref{thm:main}(iii) directly implies nonembeddability at the origin, it also implies the absence of a strongly independent CR function near $0$.
\end{remark}

In the family of examples that we construct, $\phi$ vanishes to infinite order at the origin. Hence $T^{0,1}_\phi$ agrees with the Heisenberg CR structure outside $U$ and has the same infinite jet as the Heisenberg structure at the origin. Our construction is very similar to those of Nirenberg \cite{Nirenberg1974,Nirenberg1975}, Jacobowitz--Tr\`eves \cite{JacobowitzTreves1982} and Hill-Nacinovich \cite{HillNacinovich2013} but differs in that the CR structure \cref{eq:Lphi-intro} depends on the derivatives of a single function $\phi$ (rather than simply on $\phi$ itself). The Hamiltonian-like structure of the perturbation term $\phi_t\partial_z-\phi_z\partial_t$ was chosen to ensure that the resulting CR structure admits a nowhere-zero closed section of the canonical bundle. The proof of nonembeddability is similar to that in \cite{JacobowitzTreves1982,Nirenberg1974,Nirenberg1975}, except that we are forced to introduce weight functions $z$ and $w$ and integrate by parts in order to remove the derivatives from the smooth bump function $\phi$. Note that smoothness is essential here; it is well-known that a real analytic strongly pseudoconvex CR structure is always locally embeddable.

Since the existence of a nonvanishing closed section of the canonical bundle is locally equivalent to the existence of a pseudo-Einstein contact form, \cref{thm:main} has the following immediate corollary:

\begin{corollary}\label{cor:pseudo-einstein}
There is a smooth strongly pseudoconvex CR 3-manifold which admits a
pseudo-Einstein contact form but is not locally embeddable in $\C^2$.
\end{corollary}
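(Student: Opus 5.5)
The plan is to take the structure from \cref{thm:main} globally on $M=\C\times\R$ (or on any neighborhood of the origin) and to show that the closed section $\Omega_\phi$ produces a pseudo-Einstein contact form. Nonembeddability then follows directly from \cref{thm:main}(iii). Concretely, fix $U$ and $\varepsilon$ and let $\phi$ be as in \cref{thm:main}. Then $(M,T^{0,1}_\phi)$ is a smooth strongly pseudoconvex CR $3$-manifold by (i). It carries the nowhere-zero closed section $\Omega_\phi$ of $K$ by (ii). By (iii), every $C^1$ CR function near $0$ has vanishing differential at $0$, so no CR map into $\C^2$ can be an embedding near $0$. The remaining task is to produce the contact form.

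The key input is the standard local equivalence, which I take to be the one the paper refers to before the corollary. In dimension three, a contact form $\theta$ is pseudo-Einstein, in the sense $W_\alpha:=R_{,\alpha}-\ii A_{\alpha\beta,}{}^{\beta}=0$ (the dimension-three replacement for Lee's trace condition, which is vacuous there), if and only if it is locally volume-normalized with respect to some closed nowhere-zero section $\zeta$ of $K$. Volume-normalized means
\[
\theta\wedge d\theta=\ii\,\theta\wedge(\iota_T\zeta)\wedge\overline{(\iota_T\zeta)},
\]
where $T$ is the Reeb field of $\theta$; equivalently, up to a fixed constant, $\theta\wedge d\theta=\ii\,\theta\wedge\zeta\wedge\bar\zeta$ after identifying $\zeta$ modulo $\theta$. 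So I would start from any contact form $\theta_0$ annihilating $T^{1,0}\oplus T^{0,1}$. Strong pseudoconvexity lets me choose its sign so that the Levi form is positive. Both $\theta_0\wedge d\theta_0$ and $\ii\,\theta_0\wedge\Omega_\phi\wedge\overline{\Omega_\phi}$ (read via $\iota_{T_0}$) are then nowhere-zero volume forms of the same sign, since $\Omega_\phi\neq0$. Hence their ratio is a positive smooth function $e^{u}$. Rescaling $\theta=e^{f}\theta_0$ with the appropriate multiple of $u$ gives the volume normalization globally on $M$, because both sides scale by explicit powers of $e^{f}$. I would then check by the standard computation (Lee's, adapted to $n=1$) that $W_\alpha=0$ for this $\theta$. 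Alternatively I would simply cite the equivalence.

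The only point requiring care is the dimension-three definition of pseudo-Einstein and the exact normalization constant in the equivalence; everything else is immediate from \cref{thm:main}. Since $\phi$ is compactly supported, one may also note that $\theta$ equals the standard Heisenberg pseudo-Einstein form outside $U$. This is not needed for the argument.
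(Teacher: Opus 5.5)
Your proposal is correct and takes the paper's route. The paper simply applies the local equivalence between nowhere-zero closed sections of $K$ and pseudo-Einstein contact forms (Lee, Case--Yang, Hirachi) to $\Omega_\phi$ from \cref{thm:main}, and your volume-normalization rescaling spells out the needed direction of that equivalence. One small correction: only your first display, the one using $\iota_T\zeta$, is meaningful, since the ``equivalently'' form $\theta\wedge\zeta\wedge\bar\zeta$ is a $5$-form on a $3$-manifold and should be dropped.
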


\begin{remark}
In dimension three the standard definition of a pseudo-Einstein contact form comes from \cite{CaseYang2013,Hirachi2013}. This notion was first introduced in higher dimensions by Lee \cite{Lee1988}. The existence of a pseudo-Einstein contact form is locally equivalent to the existence of a nowhere-zero closed section of the canonical bundle in all dimensions. 
\end{remark}

For embedded CR manifolds, the pseudo-Einstein condition is also related
to Fefferman's complex Monge--Amp\`ere equation, whose exact solution on
a bounded strongly pseudoconvex domain gives the Cheng--Yau complete
K\"ahler--Einstein metric \cite{Fefferman1976,ChengYau1980,Farris1986,Hirachi2013}. The pseudo-Einstein condition also simplifies the formal construction of a K\"ahler--Einstein structure with prescribed CR infinity in \cite{BiquardHerzlich2005}. By results of \cite{BiquardHerzlich2005,Cho1998,Kiremidjian1979}, however, a smooth local K\"ahler--Einstein extension on the pseudoconvex side would yield a local two-sided extension and hence local CR embeddability. \cref{thm:main} therefore shows that the pseudo-Einstein condition alone does not guarantee such an extension.

A complex surface is called \emph{unimodular} if it is endowed with a holomorphic volume form $\Omega$. A holomorphic volume form (such as $dz\wedge dw$ on $\mathbb{C}^2$) pulls back to a nowhere-zero closed section of the canonical bundle on any embedded strongly pseudoconvex CR hypersurface. Consequently, Bryant \cite{Bryant2004} terms a strongly pseudoconvex CR $3$-manifold endowed with a nowhere-zero closed section of the canonical bundle a \emph{unimodular} CR structure. In \cite{Bryant2004} Bryant considered the natural embedding problem for unimodular CR structures and showed that if such a structure is real analytic then it is always locally realizable as a hypersurface in a unimodular complex surface. \cref{thm:main} implies that this result cannot be extended to smooth unimodular CR structures:

\begin{corollary}\label{cor:unimodular}
There is a smooth unimodular CR structure that is not locally realizable as a hypersurface in a unimodular complex surface.
\end{corollary}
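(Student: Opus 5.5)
The plan is to derive \cref{cor:unimodular} directly from \cref{thm:main}. First we check that the structure it produces is unimodular, and then we show that any realization in a complex surface, unimodular or not, would contradict \cref{thm:main}(iii).

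First, fix any neighborhood $U$ and $\varepsilon>0$, and let $\phi$ be the function given by \cref{thm:main}. By part (i), $T^{0,1}_\phi$ is a smooth strongly pseudoconvex CR structure on $\C\times\R$. By part (ii), $\Omega_\phi$ is a nowhere-zero closed section of its canonical bundle. So the pair $(T^{0,1}_\phi,\Omega_\phi)$ is a smooth unimodular CR structure in the sense of Bryant \cite{Bryant2004}.

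Next, suppose for contradiction that near the origin there is a CR embedding $F$ of this structure as a real hypersurface in a unimodular complex surface $(X,\Omega)$. We may take $F$ at least $C^1$. We may also allow $F^*\Omega$ to be any multiple of $\Omega_\phi$ (or require equality); this plays no role, since we will not use the volume form at all.

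Choose local holomorphic coordinates $(\zeta_1,\zeta_2)$ on $X$ near $F(0)$. Then $h_j=\zeta_j\circ F$ for $j=1,2$ are $C^1$ functions near $0$, and they are CR functions. Indeed, $F_*$ carries $T^{0,1}_\phi$ into $T^{0,1}X$, and $\bar\partial\zeta_j=0$ there.

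Since $F$ is an immersion, the real differential $dF(0)$ is injective on $T_0M$. This is the step needing a little care, and it is elementary. The real and imaginary parts of $\zeta_1,\zeta_2$ form real coordinates on $X$. If $dh_1(0)=dh_2(0)=0$, then all four real coordinate functions would have zero differential along $F$ at $0$. That would force $dF(0)=0$, contradicting injectivity on the $3$-dimensional space $T_0M$. Hence some $h_j$ is a $C^1$ CR function near the origin with $dh_j(0)\neq0$.

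This contradicts \cref{thm:main}(iii). Therefore no such realization exists, and the structure is not locally realizable in any complex surface at the origin, in particular not in a unimodular one. There is no real obstacle here beyond this bookkeeping; all the substance is contained in \cref{thm:main}.
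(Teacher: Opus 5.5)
Your proposal is correct and follows the paper's route. The paper also deduces \cref{cor:unimodular} directly from \cref{thm:main}: parts (i)--(ii) make the structure unimodular in Bryant's sense, and part (iii) rules out any local CR embedding into a complex surface at the origin, whether unimodular or not. Your argument via the CR functions $\zeta_j\circ F$ is exactly the bookkeeping the paper uses at the end of its proof of \cref{thm:main}.
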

\begin{remark}
In \cite{Bryant2004} Bryant gives a geometric flow equation on a unimodular CR $3$-manifold $M$ whose solvability for $t\in (-\epsilon,\epsilon)$ implies that $M\times (-\epsilon,\epsilon)$ admits a unimodular complex surface structure inducing the given unimodular CR structure on $M\times \{0\}$. While for the counterexamples constructed in this paper the Bryant flow cannot have a smooth two-sided solution, it remains possible that a one-sided solution always exists realizing the unimodular CR structure as the pseudoconcave boundary of a unimodular complex surface (i.e.\ locally, there may always be a solution for $t<0$ small).
\end{remark}

In a more positive direction, the counterexample constructed shows that the class of strongly pseudoconvex CR $3$-manifolds possessing a nowhere-zero closed section of the canonical bundle required for the various $4$-dimensional spacetime constructions given in, e.g., \cite{AlekseevskyGanjiSchmalz2018,HillLewandowskiNurowski2008,Trautman1999,Trautman2002} is considerably larger than was previously expected.

\subsection{Globalization}
We conclude the introduction by noting that the examples constructed in \cref{thm:main} are standard at infinity and therefore extend, via the Cayley compactification, to $S^3$ as small nonembeddable perturbations of the standard spherical CR structure. Clearly such CR structures admit a nowhere-zero closed section of the canonical bundle on $S^3\setminus\{\infty\}$. A slight modification of this compactification gives the following global result.
\begin{theorem}\label{thm:globalization}
Given a point $p\in S^3$, there exists a smooth perturbation of the standard spherical CR structure on $S^3$ that admits a global nowhere-zero closed section of the canonical bundle but is not embeddable in any neighborhood of $p$. The perturbation can be taken to be arbitrarily small in the $C^1$ norm.
\end{theorem}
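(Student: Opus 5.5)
We prove \cref{thm:globalization} by transporting the local construction of \cref{thm:main} to $S^3$ via the Cayley transform, and then correcting the only global defect. That defect is that the pulled-back closed section of the canonical bundle degenerates at the point at infinity.

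\textbf{Step 1: transport the structure.} Use the CR automorphisms of $S^3$ to assume $p$ is the image of the origin under the Cayley map $\Psi\colon \C\times\R\to S^3\setminus\{q\}$. Here $q$ is the antipodal point, and $\Psi$ identifies the Heisenberg structure $L_0$ with the standard structure. Take $\phi$ from \cref{thm:main}, supported in a small ball $U$ around $0$. The structure $\Psi_*T^{0,1}_\phi$ agrees with the standard one outside $\Psi(U)$, so it extends smoothly across $q$.

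\textbf{Step 2: the $C^1$ bound.} On the compact set $\overline{\Psi(U)}$ the map $\Psi$ and its inverse are smooth with bounded derivatives. The perturbation $\phi_t\partial_z-\phi_z\partial_t$ is controlled in $C^1$ by $\|\phi\|_{C^2}<\varepsilon$. Hence the deformation tensor on $S^3$ is $O(\varepsilon)$ in $C^1$.

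\textbf{Step 3: non-embeddability.} This is local. By \cref{thm:main}(iii), every $C^1$ CR function near $p$ has vanishing differential at $p$, so the structure is not embeddable in any neighborhood of $p$.

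\textbf{Step 4: the global closed section (main obstacle).} The form $\Omega_\phi=-dz\wedge dw-d(\phi\,d\bar z)$ equals $-dz\wedge dw$ outside $U$. The form $dz\wedge dw$ is the pullback of a holomorphic $2$-form on $\C^2$, which has a pole or zero along the hyperplane at infinity in the Cayley picture. So we cannot simply use $\Omega_\phi$; instead we proceed as follows.

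\begin{enumerate}[label=\textnormal{(\alph*)},leftmargin=2.4em]
\item Start from the nowhere-zero closed section of the standard $S^3$, namely the restriction $\Omega_0$ of $dZ_1\wedge dZ_2$. Pulled back by $\Psi$, it equals $f\,dz\wedge dw$, where $f$ is a nonvanishing CR function on $\C\times\R$. Explicitly, $f$ is a power of $(\ii+w)^{-1}$ up to a constant, i.e.\ the pullback of the Jacobian of the holomorphic Cayley map.
\item Mimic the construction with the modified structure
\[
 L = L_0 + \bigl(\text{Hamiltonian-type field of } \phi \text{ relative to } f\bigr),
\]
chosen so that $\iota_L(dz\wedge d\bar z\wedge dt)$, suitably weighted by $f$, equals $-f\,dz\wedge dw - d(\phi\, f\, d\bar z)$ or a similar exact correction.
\item Equivalently, with $\psi=f\phi$, choose the perturbation so that the section is $-\Psi^*\Omega_0 - d(\psi\, d\bar z)$. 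This is closed, lies in the new canonical bundle by the same interior-product identity as in \cref{thm:main}(ii), equals $\Omega_0$ near $q$, and is nonvanishing for $\varepsilon$ small.
\end{enumerate}

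This is the ``slight modification'' mentioned after \cref{thm:main}. The work lies in checking that the new structure, obtained from the original one by replacing $\phi$ with $\psi$ and $dz\wedge dw$ with $f\,dz\wedge dw$, still satisfies the nonembeddability argument behind \cref{thm:main}(iii). Since $f$ is a smooth nonvanishing CR function near $0$, I would rerun that integration-by-parts argument with the weights $z,w$ multiplied by $f$. If this fails, a fallback is available: choose $\phi$ so that the correction term is directly adapted to $f$, using that $f(0)\neq0$ allows it to be absorbed locally.
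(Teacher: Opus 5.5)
Your final construction is essentially the paper's: take the restriction of $dZ_1\wedge dZ_2$, subtract $d\beta$ for a compactly supported $\beta$, and let the CR structure be the line field determined by this closed form. The only difference is that you use $\Psi^*\beta=f\phi\,d\bar z$ where the paper uses $\phi\,d\bar z$.

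\textbf{The gap.} In the chart your new structure is spanned by
$fL_0+X_{f\phi}=f\bigl(L_0+X_\phi+\phi f^{-1}X_f\bigr)$. This differs from $L_\phi$ by $\phi f^{-1}X_f\not\equiv0$, so it is not $\Psi_*T^{0,1}_\phi$. Steps 2 and 3 invoke \cref{thm:main} for $T^{0,1}_\phi$, and so they say nothing about the structure you end up with. Nonembeddability of the new structure is the step you leave as ``I would rerun\ldots if this fails, a fallback\ldots''. That step is the substance of the theorem, so as written the proposal does not prove it.

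\textbf{How to close it.} The rerun you suggest does go through, and you should carry it out as follows.
\begin{enumerate}[label=\textnormal{(\arabic*)},leftmargin=2.4em]
\item Since $L_0f=0$ and $f\phi$ vanishes on the open set $w^{-1}(G)$, a CR function $h$ satisfies $L_0h=0$ there. Hence $fzh$ and $fwh$ are annihilated by $L_0$ on $w^{-1}(G)$.
\item Also $L_0(fzh)=fzL_0h$ and $L_0(fwh)=fwL_0h$. So \cref{lem:fiber-period} gives $\int_{D_j}fzL_0h\,dV=\int_{D_j}fwL_0h\,dV=0$.
\item On $D_j$ the CR equation reads $fL_0h=-\delta X_{f\phi_j}h$, so the factor $f$ cancels. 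You obtain $\int_{D_j}zX_{f\phi_j}h\,dV=\int_{D_j}wX_{f\phi_j}h\,dV=0$.
\item For complex $\psi$, the field $X_\psi=\psi_t\partial_z-\psi_z\partial_t$ is still divergence-free, with $X_\psi z=\psi_t$ and $X_\psi w=\frac{\ii}{2}\bar{z}\psi_t-\psi_z$. The two integrations by parts therefore give $\int_{D_j}f\phi_jh_t\,dV=0$ and $\int_{D_j}f\phi_j\bigl(h_z-\frac{\ii}{2}\bar{z}h_t\bigr)dV=0$.
\item Dividing by $M_j$ and letting $j\to\infty$ yields $f(0)h_t(0)=f(0)h_z(0)=0$. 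Finally $h_{\bar z}(0)=0$ because $X_{f\phi}$ vanishes at the origin.
\end{enumerate}
You must also restate the $C^1$ bound for $L_0+f^{-1}X_{f\phi}$. This is routine, since $f^{\pm1}$ are smooth on the fixed compact support of $\phi$.

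\textbf{Comparison with the paper.} The paper's choice $\phi\,d\bar z$ gives the generator $gL_0+X_\phi$ with $\phi$ real, so the computations of \cref{prop:derivative-killing} carry over verbatim. Your choice costs only the harmless factor $f(0)\neq0$. The vague ``fallback'' is unnecessary.
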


In particular, it follows that there is a pseudo-Einstein CR structure near the standard structure on $S^3$ that is not locally embeddable. Moreover, from the proof in \cref{sec:globalization} below and the characterization of pseudo-Einstein contact forms in terms of volume normalization \cite{CaseYang2013,Hirachi2013,Lee1988} it follows that the pseudo-Einstein contact form can be taken to be a small perturbation of the standard contact form. 


\subsection*{Acknowledgments} The counterexample construction presented below was discovered through experimentation using ChatGPT Plus on August 18, 2026. ChatGPT was also used to create an initial rough draft of this manuscript (which has since been thoroughly revised) and in the proofreading process. The author independently checked all calculations and arguments presented here and takes full responsibility for the contents of the paper.

The author would like to thank Pawe\l\ Nurowski and Rod Gover for directing him to this problem when he was a student. He would also like to thank Howard Jacobowitz, Gerd Schmalz and Arman Taghavi-Chabert for helpful conversations and for sharing their skepticism regarding the Trautman conjecture.

\section{Hamiltonian perturbations of the Heisenberg CR structure}\label{sec:hamiltonian}

In the presence of a fixed volume form, the Trautman conjecture can be rephrased in terms of a vector field dual to the closed section of the canonical bundle \cite{Trautman1999}. To see this we write $z=x+iy$ on $\mathbb{C}\times \mathbb{R}$ and fix the background volume form $dV=dx\wedge dy\wedge dt$. For convenience, we will make use of both $dV$ and the complex volume form
\begin{equation*}
 \nu=dz\wedge d\bar z\wedge dt = -2\ii\,dV.
\end{equation*}
Since $\nu$ is a constant multiple of $dV$ it defines the same notion of divergence as $dV$ (namely the standard divergence operator on $\mathbb{R}^3$). Hence for any real or complex vector field $X$
$$
\mathcal{L}_X \nu = (\mathrm{div}\,X)\,\nu.
$$
By Cartan's formula, it follows that $\mathrm{div}\, X =0$ if and only if $\iota_X\nu$ is closed. Thus, when $X$ is a $(0,1)$-vector field, $\mathrm{div}\, X =0$ if and only if $\Omega = \iota_X\nu$ is a closed section of the canonical bundle.

 For a smooth real function $\phi$, set
\begin{equation}
 X_\phi=\phi_t\partial_z-\phi_z\partial_t,
 \qquad L_\phi=L_0+X_\phi.
\end{equation}

\begin{proposition}\label{prop:closed-section}
The field $L_\phi$ is divergence-free relative to $\nu$. If $\phi$ is supported in a fixed precompact subset of $\C\times \R$ and $\|\phi\|_{C^2}$ is sufficiently small, then $T^{0,1}_\phi$ is a strongly pseudoconvex CR structure on $\C\times \R$ and
\begin{equation*}
 \Omega_\phi:=\iota_{L_\phi}\nu
 =-dz\wedge dw-d(\phi\,d\bar z)
\end{equation*}
is then a closed section of the canonical bundle of
$T^{0,1}_\phi=\SpanC\{L_\phi\}$.  Moreover, the form $\Omega_\phi$ is nowhere zero.  
\end{proposition}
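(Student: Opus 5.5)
The plan is to verify each claim by direct computation in the coordinates $(z,\bar z,t)$, using the divergence criterion recalled just before the proposition.

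\emph{Divergence-free property.} With respect to $\nu$ (equivalently $dV$), the divergence of a complex vector field $a\partial_z+b\partial_{\bar z}+c\partial_t$ is $\partial_z a+\partial_{\bar z}b+\partial_t c$. For $L_0$ this gives $\partial_{\bar z}(1)+\partial_t(-\tfrac{\ii}{2}z)=0$. For $X_\phi$ it gives $\partial_z\phi_t-\partial_t\phi_z=0$ by equality of mixed partials. Hence $\diver L_\phi=0$.

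\emph{Strong pseudoconvexity and the CR condition.} Since $T^{0,1}_\phi$ has complex rank one, integrability is automatic. It remains to show that $L_\phi$, $\bar L_\phi$ and a real transversal direction span $\C TM$, and that the Levi form is nonzero. I will compute the determinant of the coefficient matrix of $L_\phi,\bar L_\phi$ and $[L_\phi,\bar L_\phi]$ in the frame $\partial_z,\partial_{\bar z},\partial_t$. For $\phi=0$ one has $[L_0,\bar L_0]=\ii\,\partial_t$ (up to a nonzero constant). The perturbation changes the coefficients of $L_\phi$ by first derivatives of $\phi$, and the coefficients of the bracket by at most second derivatives of $\phi$, with coefficients bounded on the precompact support. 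So when $\|\phi\|_{C^2}$ is small, the determinant stays uniformly close to its nonzero Heisenberg value, and it equals that value outside the support. This yields linear independence of $L_\phi$ and $\bar L_\phi$, so $T^{0,1}\cap\overline{T^{0,1}}=0$, together with nondegeneracy of the Levi form. The one step requiring care is that the $\partial_t$ coefficient of the bracket involves $z$-dependent terms, such as $\bar z\phi_{tt}$; here precompactness of the support is what bounds these terms.

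\emph{The formula for $\Omega_\phi$ and closedness.} By construction $\iota_{L_\phi}\Omega_\phi=\iota_{L_\phi}\iota_{L_\phi}\nu=0$, so $\Omega_\phi$ is a section of $K$. Closedness follows from $d\iota_{L_\phi}\nu=\mathcal L_{L_\phi}\nu=(\diver L_\phi)\nu=0$. For the explicit formula, I first compute
\[
\iota_{\partial_{\bar z}}\nu=-dz\wedge dt,\qquad \iota_{\partial_t}\nu=dz\wedge d\bar z,\qquad \iota_{\partial_z}\nu=d\bar z\wedge dt.
\]
Then
\[
\iota_{L_0}\nu=-dz\wedge dt-\tfrac{\ii}{2}z\,dz\wedge d\bar z=-dz\wedge dw,
\]
using $dw=dt+\tfrac{\ii}{2}(\bar z\,dz+z\,d\bar z)$, where the term $dz\wedge dz$ vanishes. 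Similarly,
\[
\iota_{X_\phi}\nu=\phi_t\,d\bar z\wedge dt-\phi_z\,dz\wedge d\bar z.
\]
On the other hand, $-d(\phi\,d\bar z)=-(\phi_z\,dz+\phi_t\,dt)\wedge d\bar z$, which matches $\iota_{X_\phi}\nu$ term by term.

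\emph{Nowhere vanishing.} Since $\nu$ is a volume form and $L_\phi\neq0$ (its $\partial_{\bar z}$ coefficient is $1$), the form $\iota_{L_\phi}\nu$ never vanishes.

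I expect no genuine obstacle. The only place where smallness and precompact support are used is the uniform bound for the Levi determinant, and the rest is bookkeeping of signs.
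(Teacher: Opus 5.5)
Your proof is correct and follows the paper's argument essentially step for step. You use the divergence-freeness of $L_0$ and $X_\phi$ to get closedness via $d\iota_{L_\phi}\nu=(\diver L_\phi)\nu$, the same contraction formulas for $\iota_{L_0}\nu$ and $\iota_{X_\phi}\nu$, and an openness argument around $[L_0,\overline{L_0}]=\ii\partial_t$ for strong pseudoconvexity, where your determinant of $L_\phi,\overline{L_\phi},[L_\phi,\overline{L_\phi}]$ simply makes the paper's ``open condition in the $C^1$ topology'' explicit. The only cosmetic difference is that you get nonvanishing from $L_\phi\neq0$ and nondegeneracy of $\nu$, whereas the paper reads off the constant coefficient $-1$ of $dz\wedge dt$.
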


\begin{proof}
Direct computation gives
\begin{equation*}
 \diver X_\phi=\partial_z\phi_t-\partial_t\phi_z=0,
\end{equation*}
and that $L_0$ is also divergence-free. It follows that $\Omega_{\phi}$ is closed. 

When $\phi=0$ one has
\begin{equation*}
 [L_0,\overline{L_0}]=\ii\partial_t.
\end{equation*}
Linear independence of $L_0$ and $\overline{L_{0}}$ and nondegeneracy of the Levi form are open conditions in the $C^1$ topology on the defining field, hence in the $C^2$ topology on $\phi$.  It follows that $T^{0,1}_\phi$ is a strongly pseudoconvex CR structure when $\|\phi\|_{C^2}$ is small.

Contracting with $\nu$ we have
\begin{align*}
 \iota_{L_0}\nu
 &=-dz\wedge dt-\frac{\ii}{2}z\,dz\wedge d\bar z
   =-dz\wedge dw,\\
 \iota_{X_\phi}\nu
 &=\phi_t\,d\bar z\wedge dt-\phi_z\,dz\wedge d\bar z
   =-d(\phi\,d\bar z),
\end{align*}
and hence $\Omega_{\phi}=\iota_{L_\phi}\nu
 =-dz\wedge dw-d(\phi\,d\bar z)$.  Clearly, 
$\iota_{L_\phi}\Omega_\phi=0$ so $\Omega_\phi$ is a section of the canonical bundle. The coefficient of $dz\wedge dt$ in $\Omega_{\phi}$ is $-1$, so $\Omega_\phi$ never vanishes.    
\end{proof}

\section{Bump functions on shrinking solid tori}
\label{sec:tori}

Here we construct the desired scalar perturbation function $\phi$. For $j\geq 0$ let
\begin{equation*}
 a_j=2^{-j},
\qquad  
A_j=\left\{w:|w-\ii a_j|< \vphantom{\tfrac12} ca_j\right\},
\qquad
H_j=\left\{w:|w-\ii a_j|< \tfrac12 ca_j\right\},
\end{equation*}
where $0<c<\frac{1}{3}$. Note that every disk $A_j$ lies in the upper half-plane and the disks accumulate only at the
origin.

Consider the map $w:\C\times\R\to\C$ given by $w=t+\frac{\ii}{2}|z|^2$, as above. This map has circular fibers over points in the upper half-plane that shrink to points as one approaches the real axis.  Abusing notation by writing $w$ also for a point in the image plane, the fiber over a point $w=u+\ii v$ with $v>0$ is then
\begin{equation}\label{eq:fiber}
 \Gamma_w=\{t=u,\ |z|^2=2v\}\simeq S^1.
\end{equation}
Thus
\begin{equation*}
 D_j=w^{-1}(A_j)
\end{equation*}
is the interior of a solid torus.  The sets $D_j$ are pairwise disjoint and
shrink to the origin.

Choose a nonnegative, nonzero function $\rho\in C_c^\infty(\C)$ with
\begin{equation*}
 \supp\rho\subset\{\zeta:|\zeta|<1/2\}.
\end{equation*}
Define the explicit bump functions
\begin{equation}\label{eq:bumps}
 \phi_j(z,t)=e^{-1/a_j^2}
 \rho\!\left(\frac{w(z,t)-\ii a_j}{ca_j}\right),
 \qquad
 \phi=\delta\sum_{j\ge j_0}\phi_j,
\end{equation}
where $j_0$ will be taken large and $\delta>0$ small.  Then
\begin{equation*}
 \supp\phi_j\subset w^{-1}(H_j)\Subset D_j.
\end{equation*}
%

\begin{lemma}\label{lem:smallness}
The function $\phi$ in \cref{eq:bumps} is smooth and vanishes to infinite order at the origin. 
Given a prescribed neighborhood $U$ of the origin and
$\varepsilon>0$, the parameters $j_0$ and $\delta$ can be chosen so that
$\supp\phi\subset U$ and $\|\phi\|_{C^2}<\varepsilon$. 
\end{lemma}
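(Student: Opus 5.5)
The plan is to control each bump $\phi_j$ separately, using explicit derivative bounds together with the superexponential prefactor $e^{-1/a_j^2}$, and then sum over $j$. The supports of the $\phi_j$ lie in the pairwise disjoint sets $D_j$, so the sum defining $\phi$ is locally finite away from the origin. Hence $\phi$ is smooth on $(\C\times\R)\setminus\{0\}$, and the only point requiring care is the origin.

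\textbf{Step 1: location of the supports.} On $\supp\phi_j$ we have $|w-\ii a_j|<\tfrac12 ca_j$, so $|t|\le a_j$ and $|z|^2=2\,\mathrm{Im}\,w\le 3a_j$. Thus $\supp\phi_j$ lies in the box $\{|t|\le a_j,\ |z|\le \sqrt{3a_j}\}$, which shrinks to the origin. Choosing $j_0$ large therefore gives $\supp\phi\subset U$, since the closure of the union of these supports is contained in the box for $j=j_0$.

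\textbf{Step 2: derivative bounds.} The map $w$ is a polynomial in $(x,y,t)$ whose derivatives of order $\le k$ are bounded by $C_k(1+|z|)$ on the bounded region in question. By the chain rule (Faà di Bruno), every derivative of order $k$ of $\rho\bigl((w-\ii a_j)/(ca_j)\bigr)$ is bounded by $C_k\|\rho\|_{C^k}(ca_j)^{-k}$. Consequently
\begin{equation*}
 \|\phi_j\|_{C^k}\le C_k'\,a_j^{-k}e^{-1/a_j^2}.
\end{equation*}
The right-hand side is summable in $j$ for every $k$, and it tends to $0$ faster than any power of $a_j$.

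\textbf{Step 3: smoothness, infinite-order vanishing, and smallness.} Since $\sum_j\|\phi_j\|_{C^k}<\infty$ for every $k$, the series converges in every $C^k$ norm, so $\phi$ is $C^\infty$ on all of $\C\times\R$, including at the origin. Every point of $\supp\phi_j$ has distance at most $C\sqrt{a_j}$ from the origin, and $a_j^{-k}e^{-1/a_j^2}=O(a_j^N)$ for every $N$. Hence $|\partial^\alpha\phi(p)|=O(|p|^N)$ for all $\alpha$ and $N$, and in particular all derivatives of $\phi$ vanish at $0$, so $\phi$ vanishes to infinite order there. Finally,
\begin{equation*}
 \|\phi\|_{C^2}\le\delta\sum_{j\ge j_0}C_2' a_j^{-2}e^{-1/a_j^2},
\end{equation*}
so choosing $\delta$ small (or $j_0$ large) makes $\|\phi\|_{C^2}<\varepsilon$.

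The only mildly delicate point is the uniform chain-rule bound in Step 2. It requires noting that derivatives of $w$ stay bounded on the relevant compact region, which holds because $|z|\le\sqrt{3}$ there.
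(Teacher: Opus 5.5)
Your proof is essentially correct and follows the paper's route. Both use the per-bump estimate $\|\partial^\alpha\phi_j\|_{L^\infty}\le C_\alpha a_j^{-|\alpha|}e^{-1/a_j^2}$, supports shrinking to the origin, then $j_0$ large and $\delta$ small. Your use of $\sum_j\|\phi_j\|_{C^k}<\infty$ to get smoothness across the origin is a slightly cleaner substitute for the paper's ``locally finite away from $0$ and extends with zero infinite jet.''

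There is one slip in Step 3. You infer $|\partial^\alpha\phi(p)|=O(|p|^N)$ from the \emph{upper} bound $|p|\le C\sqrt{a_j}$ on $\supp\phi_j$, but that is the wrong direction. You need the lower bound $|p|\ge|z|>\sqrt{a_j}$, which holds because $|z|^2=2\,\mathrm{Im}\,w>(2-c)a_j$ on $\supp\phi_j$. Alternatively, you can skip that estimate entirely: the vanishing of all derivatives at $0$ follows by differentiating your $C^k$-convergent series term by term, since each $\phi_j$ vanishes near the origin.
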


\begin{proof}
On $D_j$ one has $|t|=O(a_j)$ and $|z|=O(a_j^{1/2})$.  For every multi-index
$\alpha$, differentiation of the rescaled bump gives the bound
\begin{equation}\label{eq:derivative-bound}
 \|\partial^\alpha\phi_j\|_{L^\infty}
 \le C_\alpha e^{-1/a_j^2}a_j^{-|\alpha|}.
\end{equation}
The exponential factor dominates every inverse power of $a_j$.  Since the
distance of $D_j$ from the origin is comparable to $a_j^{1/2}$, every derivative
of $\phi_j$ tends to zero faster than every power of that distance.  The
supports are disjoint, so the sum is locally finite away from the origin and
extends across the origin with zero infinite jet.  Taking $j_0$ large puts all
supports in $U$, and the final factor $\delta$ gives the asserted $C^2$
smallness.
\end{proof}

\section{Weighted moment identities}\label{sec:period}

Here we apply ideas from \cite{Lewy1957,Nirenberg1975,JacobowitzTreves1982,HillNacinovich2013} to establish a key family of weighted moment identities that will be used in the next section to prove that $L_{\phi}$ is not locally embeddable.

Given a point $w = u+iv$ in the upper half-plane and $\chi(z,\bar{z},t)$ a $C^1$ function on $\C\times\R$, we define the fiber period to be
\begin{equation}\label{eq:period-definition}
 I_\chi(w)=\oint_{\Gamma_w}\chi\,dz,
\end{equation}
where $\Gamma_{w}$ is as in \cref{eq:fiber}. Setting $r=(2v)^{1/2}$ we therefore have
\begin{equation}\label{eq:period-formula}
 I_\chi(w)=\int_0^{2\pi}\ii re^{\ii\vartheta}\chi(re^{\ii\vartheta},re^{-\ii\vartheta},u)\,d\vartheta.
\end{equation}

\begin{lemma}\label{lem:fiber-period}
Let $G$ be a connected open subset of the upper half-plane that contains a one-sided neighborhood $\{\,u+iv \,\colon\, u\in J,\, v\in (0,\eta) \, \}$ of an open interval $J$ in the real line for some $\eta>0$.  Suppose that $\chi$ is $C^1$
in a neighborhood of $\overline{w^{-1}(G)}$ and satisfies
\begin{equation}\label{eq:L0chi-zero}
 L_0\chi=0\qquad\text{on }w^{-1}(G).
\end{equation}
Then
\begin{equation}\label{eq:period-zero}
 I_\chi(w)=0\qquad\text{for all } w\in G.
\end{equation}
If, in addition, $A$ is an open disk compactly contained in the
upper half-plane, $\partial A\subset G$, and the same $\chi$  is $C^1$
in a neighborhood of $\overline{w^{-1}(A)}$, then
\begin{equation}\label{eq:bulk-zero}
 \int_{w^{-1}(A)}L_0\chi\,dV=0.
\end{equation}
\end{lemma}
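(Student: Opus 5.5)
The plan is the classical Lewy–Nirenberg fiber-integration argument: show that $I_\chi$ is holomorphic in $w$ on $G$ and tends to zero at the real interval $J$, conclude that $I_\chi\equiv0$, and then derive \cref{eq:bulk-zero} from the same Stokes identity.

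The basic pointwise identity is the following. Since $dz\wedge dw=-\iota_{L_0}\nu$ (computed in the proof of \cref{prop:closed-section}) and $d(\iota_{L_0}\nu)=0$, we get
\begin{equation*}
 d(\chi\,dz\wedge dw)=-d\chi\wedge\iota_{L_0}\nu=-(L_0\chi)\,\nu .
\end{equation*}
Here we use $\iota_{L_0}(d\chi\wedge\nu)=0$ (a $4$-form on a $3$-manifold), which gives $d\chi\wedge\iota_{L_0}\nu=(L_0\chi)\nu$.

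Next, let $R$ be a region with piecewise smooth boundary compactly contained in the upper half-plane. There $w$ is a submersion with circle fibers $\Gamma_w$, so $w^{-1}(R)$ is a solid torus with boundary $w^{-1}(\partial R)$. Stokes' theorem holds for $C^1$ forms. Fiber integration, writing $\chi\,dz\wedge dw$ as the fiber integral of $\chi\,dz$ followed by integration in $w$, then gives
\begin{equation*}
 \int_{w^{-1}(R)}(L_0\chi)\,\nu=\pm\int_{\partial R}I_\chi(w)\,dw .
\end{equation*}
The sign is fixed by the orientation conventions and is irrelevant. Getting this fiber-integration formula right is the one point needing care: the orientation of $\Gamma_w$, and the fact that on the torus $w^{-1}(\partial R)$ the form $dw$ restricts to the pullback of $dw|_{\partial R}$. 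I expect this to be the main (though routine) obstacle.

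Holomorphy and boundary decay. Continuity of $I_\chi$ on $G$ follows from \cref{eq:period-formula}, since $\chi$ is $C^1$. Applying the identity to small disks $R\Subset G$ and using \cref{eq:L0chi-zero} gives $\oint_{\partial R}I_\chi\,dw=0$, so by Morera's theorem $I_\chi$ is holomorphic on $G$. Moreover, $\chi$ is continuous (hence bounded) near $\overline{w^{-1}(G)}$, which includes the points $\{z=0,\ t\in J\}$. So \cref{eq:period-formula} yields $|I_\chi(u+\ii v)|\le 2\pi(2v)^{1/2}\sup|\chi|\to0$ as $v\to0^+$, locally uniformly for $u\in J$. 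Extending $I_\chi$ by $0$ on $J$, it is continuous on the one-sided neighborhood of $J$ and real on $J$. By the Schwarz reflection principle it extends holomorphically across $J$ and vanishes on an interval, hence vanishes identically on the half-strip. Since $G$ is connected, the identity theorem gives \cref{eq:period-zero}.

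Bulk identity. For the disk $A$, take $R=A$, which is allowed since $\chi$ is $C^1$ near $\overline{w^{-1}(A)}$. The right-hand side of the identity is $\pm\int_{\partial A}I_\chi\,dw$, and this vanishes because $\partial A\subset G$ and $I_\chi=0$ on $G$. Since $\nu=-2\ii\,dV$, this yields \cref{eq:bulk-zero}. Note that no condition on $L_0\chi$ inside $w^{-1}(A)$ is needed for this step.
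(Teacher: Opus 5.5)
Your proof is correct. Its skeleton matches the paper's: holomorphy of $I_\chi$ on $G$, the bound $|I_\chi(u+\ii v)|\le 2\pi(2v)^{1/2}\sup|\chi|$, Schwarz reflection across $J$, the identity theorem, and then Stokes plus fiber integration for \cref{eq:bulk-zero}.

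The genuine difference is how you get holomorphy. The paper differentiates \cref{eq:period-formula} under the integral sign in $u$ and $v$. It uses $\int_0^{2\pi}\partial_\vartheta(z\chi)\,d\vartheta=0$ to cancel the $z\chi$ and $z^2\chi_z$ terms, and arrives at the pointwise formula $\partial_{\bar w}I_\chi=-\int_0^{2\pi}L_0\chi\,d\vartheta$. You instead integrate $d(\chi\,dz\wedge dw)=-(L_0\chi)\nu$ over $w^{-1}(R)$ for small $R\Subset G$ and apply Morera.

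These are the same fact in integrated versus differential form. In the coordinates $(u,v,\vartheta)$ one has $dV=du\,dv\,d\vartheta$, and Green's formula gives $\oint_{\partial R}I_\chi\,dw=2\ii\int_R\partial_{\bar w}I_\chi\,du\,dv$.

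Your route is more economical: a single Stokes/fiber-integration identity drives both halves of the lemma, and the somewhat delicate $\partial_v$ computation disappears. The paper's route gives an explicit formula for $\partial_{\bar w}I_\chi$ valid for every $C^1$ function $\chi$, and needs no orientation bookkeeping until the bulk step.

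One small point: the textbook form of Morera's theorem uses triangles or rectangles. You can either apply your identity to those, which your piecewise-smooth setup permits, or note that $I_\chi$ is $C^1$ on $G$. In the latter case, vanishing of $\oint_{\partial R}I_\chi\,dw$ over all small disks already forces $\partial_{\bar w}I_\chi=0$ by Green's formula.
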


\begin{proof}
Differentiating \cref{eq:period-formula} with respect to $u$ we have
\begin{equation} \label{eq:Iu}
\partial_u I_\chi
 =\int_0^{2\pi}\ii z\chi_t\,d\vartheta.
\end{equation}
Similarly, since $r=(2v)^{1/2}$ implies $\partial_v = \frac{1}{r}\partial_r$, we have
\begin{equation} \label{eq:Iv}
 \partial_v I_\chi
 =\frac{\ii}{r}\int_0^{2\pi}
   \partial_r (z\chi)
   \,d\vartheta = \frac{\ii}{r^2}\int_0^{2\pi}
   \bigl(z\chi+z^2\chi_z+r^2\chi_{\bar z}\bigr)
   \,d\vartheta,
\end{equation}
where $z=re^{\ii \vartheta}$. Now, integrating an angular derivative from $0$ to $2\pi$ gives zero, and hence
\begin{equation}\label{eq:angular-derivative}
 0=\int_0^{2\pi}\partial_\vartheta(z\chi)\,d\vartheta
 =\ii\int_0^{2\pi}
   \bigl(z\chi+z^2\chi_z-r^2\chi_{\bar z}\bigr)
   \,d\vartheta.
\end{equation}
Combining \cref{eq:Iv} and \cref{eq:angular-derivative} gives
\begin{equation} \label{eq:Iv2}
 \partial_v I_\chi
 =\ii \int_0^{2\pi} 2\chi_{\bar{z}} \,d\vartheta.
\end{equation}
Combining  \cref{eq:Iu} and \cref{eq:Iv2} we therefore obtain
\begin{equation}\label{eq:period-holomorphic}
 2\partial_{\bar w}I_\chi
 =\int_0^{2\pi}
   \bigl(\ii z\chi_t-2\chi_{\bar z}\bigr)\,d\vartheta
 =-2\int_0^{2\pi}L_0\chi\,d\vartheta=0.
\end{equation}
Thus $I_\chi$ is holomorphic on $G$.  When a fiber collapses to the real axis,
\begin{equation}\label{eq:period-collapse}
 |I_\chi(u+\ii v)|
 \le2\pi(2v)^{1/2}\sup_{\Gamma_{u+\ii v}}|\chi|
 \longrightarrow0.
\end{equation}
In particular, $I_{\chi}$ extends continuously to zero on the real open interval $J$. Therefore, $I_\chi$ extends holomorphically across $J$ by Schwarz reflection. The identity theorem then implies that $I_{\chi}$ is zero near $J$, and the connectedness of $G$ gives \cref{eq:period-zero}.

To prove \cref{eq:bulk-zero}, we first note that by integrating fiber-wise and using \cref{eq:period-zero} we have, up to a sign depending on the choices of orientations,
\begin{equation}\label{eq:fiber-integration}
\int_{\partial w^{-1}(A)}\chi\,dz\wedge dw 
 = \pm\int_{\partial A}
    \left(\oint_{\Gamma_w}\chi\,dz\right)dw
 = \pm\int_{\partial A}I_\chi(w)\,dw
 =0.
\end{equation}
Then, since 
\begin{equation}\label{eq:stokes-algebra}
 d(\chi\,dz\wedge dw)
 =-L_0\chi\,dz\wedge d\bar z\wedge dt
 =-L_0\chi\,\nu,
\end{equation}
it follows by Stokes' theorem that
\begin{equation}
 0= \int_{\partial w^{-1}(A)}\chi\,dz\wedge dw
 = \int_{w^{-1}(A)} d(\chi\,dz\wedge dw) = - \int_{w^{-1}(A)}L_0\chi\,\nu.
\end{equation}
Since $\nu=-2\ii\,dV$, this proves \cref{eq:bulk-zero}.
\end{proof}

An important consequence of \cref{eq:bulk-zero} is the following corollary:

\begin{corollary}\label{cor:weighted-bulk}
Let $\phi$ be given by \cref{eq:bumps}, and let $h$ be a  $C^1$ function defined in a neighborhood $V$ of the  origin in $\mathbb{C}\times \mathbb{R}$ satisfying
$L_\phi h=0$.  Then, for every sufficiently
large $j$,
\begin{equation}\label{eq:weighted-bulk}
 \int_{D_j}zL_0h\,dV=0,
 \qquad
 \int_{D_j}wL_0h\,dV=0.
\end{equation}
\end{corollary}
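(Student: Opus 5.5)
Apply the bulk identity \cref{eq:bulk-zero} of \cref{lem:fiber-period} to $\chi=zh$ and to $\chi=wh$, with $A=A_j$ and a suitable region $G$ on which $L_0\chi=0$.

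\emph{Step 1: $zh$ and $wh$ are $L_0$-CR off the bumps.} Since $L_0 z=0$ and $L_0 w=\partial_{\bar z}(\tfrac{\ii}{2}|z|^2)-\tfrac{\ii}{2}z=0$, we have
\[
L_0(zh)=zL_0h \quad\text{and}\quad L_0(wh)=wL_0h .
\]
Outside $\supp\phi$ the perturbation $X_\phi$ vanishes, so $L_0h=L_\phi h=0$ there. In particular $L_0\chi=0$ on $w^{-1}(G)$ whenever $G$ avoids $\bigcup_k \overline{H_k}$.

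\emph{Step 2: choice of $G$.} Fix $j$ large. Choose a small ball $B\subset V$ about the origin, with radius $R$. Since the fiber over $u+\ii v$ sits at $|t|=|u|$ and $|z|=(2v)^{1/2}$, there is a half-disk $Q=\{|w|<\eta,\ v>0\}$ with $w^{-1}(Q)\Subset B$. Take
\[
G=Q\setminus\bigcup_k \overline{H_k}.
\]
This set is open. It is connected: the closed disks $\overline{H_k}$ are pairwise disjoint, accumulate only at $0$, and lie away from the real axis, so their complement in the half-disk stays connected. It contains the one-sided neighborhood $\{u\in J,\ 0<v<\eta'\}$ with $J=(\eta/4,\eta/2)$ and $\eta'$ small, because the disks near that portion of the axis are bounded away from it. Indeed $A_k$ lies in $\{v>(1-c)a_k\}$ with real part in $(-ca_k,ca_k)$, so only the finitely many $A_k$ with $a_k\gtrsim\eta$ come near $J$, and these have $v$ bounded below. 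For $j$ large, $A_j\Subset Q$ and
\[
\partial A_j\subset Q\setminus\overline{H_j}.
\]
Also $\partial A_j$ misses the other $\overline{H_k}$, since the $A_k$ are disjoint: $|\ii a_j-\ii a_{j+1}|=a_{j+1}>c(a_j+a_{j+1})$ because $c<1/3$. Hence $\partial A_j\subset G$.

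\emph{Step 3: conclude.} The function $\chi=zh$ (resp. $wh$) is $C^1$ near $\overline{w^{-1}(Q)}\subset V$, provided $B$ is chosen with $\overline{w^{-1}(Q)}\subset V$. By Step 1 it satisfies $L_0\chi=0$ on $w^{-1}(G)$. Then \cref{eq:bulk-zero} with $A=A_j$ gives
\[
\int_{D_j}zL_0h\,dV=0 \quad\text{and}\quad \int_{D_j}wL_0h\,dV=0 .
\]

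\emph{Main obstacle.} The hypotheses on $G$ are where care is needed. Connectedness and the one-sided neighborhood of $J$ must hold despite the infinitely many excised disks. This rests on the disjointness of the $\overline{H_k}$ and on their accumulation only at $0$, so that near $J$ only finitely many disks matter and they stay well inside the upper half-plane. Everything else is direct.
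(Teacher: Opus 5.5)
Your proposal is correct and follows essentially the same argument as the paper. The paper also takes $G$ to be a small upper half-disk (with $w^{-1}$ of its closure inside $V$) minus $\bigcup_k\overline{H_k}$, uses $L_0(zh)=zL_0h$ and $L_0(wh)=wL_0h$ with $L_0h=0$ on $w^{-1}(G)$, and applies the bulk identity of \cref{lem:fiber-period} with $A=A_j$. Your explicit choice of $J$ and your disjointness check giving $\partial A_j\subset G$ fill in details that the paper asserts without proof.
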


\begin{proof}
Let $U$ be an open disk centered at the origin in the complex plane small enough that $w^{-1}(\overline{U})$ is contained in $V$. Let $U^+$ be the intersection of $U$ with the upper half-plane, and let $G$ be $U^+$ minus the closure of $\bigcup_j H_j$. Note that for all $j$ sufficiently large, $\overline{A_j}$ is contained in $U^+$. Since the closed disks $\overline{H_j}$ are pairwise separated and accumulate only at the boundary point $0$, their complement $G$ is path connected and contains a one-sided neighborhood of an open interval in the real axis (in the sense of \cref{lem:fiber-period}).
 By \cref{eq:bumps}, $\phi=0$ on $w^{-1}(G)$, and hence $L_0h=0$ there. The two standard CR functions $z$ and $w=t+\frac{\ii}{2}|z|^2$ on the Heisenberg group satisfy
\begin{equation}\label{eq:flat-first-integrals}
 L_0z=L_0w=0.
\end{equation}
It follows that $L_0(zh)=L_0(wh)=0$ on $w^{-1}(G)$, and that 
$L_0(zh) = zL_0h$ and $L_0(wh) = wL_0h$ on $w^{-1}(U^+)$.  When $j$ is sufficiently large, $\partial A_j \subset G$.  Applying 
 \cref{lem:fiber-period} with $\chi=zh$ and with $\chi=wh$ therefore gives \cref{eq:weighted-bulk}.
\end{proof}

\section{Weighted moments and nonembeddability}
\label{sec:moments}

We now use the two weighted identities to show that the differential of every local CR function must vanish at the origin.

\begin{proposition}\label{prop:derivative-killing} 
Let $\phi$ be the function in \cref{eq:bumps}.  If $h$ is a $C^1$ function satisfying
$L_\phi h=0$ near the origin, then
\begin{equation}\label{eq:dh-zero}
 dh(0)=0.
\end{equation}
\end{proposition}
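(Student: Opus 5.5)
The plan is to turn the two weighted identities of \cref{cor:weighted-bulk} into vanishing weighted averages of $h_t$ and $h_z$ over the tori $D_j$, and then let $j\to\infty$.

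\textbf{Step 1: rewrite the identities.} Since $L_\phi h=0$, we have $L_0h=-X_\phi h$ near the origin. For all large $j$, \cref{eq:weighted-bulk} then reads
\begin{equation*}
 \int_{D_j} z\,X_\phi h\,dV=0,\qquad \int_{D_j} w\,X_\phi h\,dV=0.
\end{equation*}
On $D_j$ only the bump $\delta\phi_j$ contributes to $\phi$, and $\phi_j$ is smooth with compact support inside $D_j$. So every integration by parts below has no boundary terms, and only needs $h\in C^1$.

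\textbf{Step 2: integrate by parts twice.} By \cref{prop:closed-section}, $X_\phi$ is divergence-free and has compact support in $D_j$. Hence $\int_{D_j} z\,X_\phi h\,dV=-\int_{D_j} h\,X_\phi z\,dV$, and similarly with $w$ in place of $z$.

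For the first identity, $X_\phi z=\phi_t$. Integrating by parts in $t$ gives
\begin{equation*}
 \int_{D_j}\phi\,h_t\,dV=0.
\end{equation*}
For the second identity, $X_\phi w=\phi_t w_z-\phi_z w_t=\tfrac{\ii}{2}\bar z\,\phi_t-\phi_z$. Moving the derivatives onto $h$, and using $\partial_t\bar z=0$, gives
\begin{equation*}
 \int_{D_j}\phi\,\bigl(h_z-\tfrac{\ii}{2}\bar z\,h_t\bigr)\,dV=0.
\end{equation*}
This is exactly where the weights $z$ and $w$ do their job: they remove all derivatives from the bump, leaving $\phi$ itself as a weight.

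\textbf{Step 3: pass to the limit.} On $D_j$ we have $\phi=\delta\phi_j\ge0$, and $\int_{D_j}\phi\,dV>0$ because $\rho\ge0$ is not identically zero. Both identities therefore say that averages of continuous functions against a probability density on $D_j$ vanish. The sets $D_j$ shrink to the origin, $h$ is $C^1$, and $\bar z\to0$ there. Letting $j\to\infty$ yields $h_t(0)=0$ and then $h_z(0)=0$.

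Finally, $\phi$ vanishes to infinite order at $0$ by \cref{lem:smallness}, and $L_0=\partial_{\bar z}$ at $z=0$. So evaluating $L_\phi h=0$ at the origin gives $h_{\bar z}(0)=0$. Together these give $dh(0)=0$.

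\textbf{Main obstacle.} The main point to verify is the second computation: getting $X_\phi w$ and its integration by parts right, with correct signs. It must produce a combination whose leading term at the origin is $h_z$ and whose other term carries the vanishing factor $\bar z$. Everything else is a routine continuity argument.
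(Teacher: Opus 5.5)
Your proposal is correct and follows the paper's proof essentially step for step. The paper also integrates by parts twice against the divergence-free, compactly supported field $X_{\phi_j}$, using $X_{\phi_j}z=(\phi_j)_t$ and $X_{\phi_j}w=\frac{\ii}{2}\bar z(\phi_j)_t-(\phi_j)_z$; it then normalizes by the positive masses $M_j$, lets $j\to\infty$, and reads off $h_{\bar z}(0)=0$ from $L_\phi|_0=\partial_{\bar z}$.
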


\begin{proof}
For every sufficiently large $j$, the solid torus $D_j$ lies in the domain of
$h$.  Because the supports are disjoint, only $\delta\phi_j$ is nonvanishing on
$D_j$, and the equation $L_\phi h=0$ can be written as
\begin{equation}\label{eq:L0h-X}
 L_0h=-\delta X_{\phi_j}h,
 \qquad \text{where }
 X_{\phi_j}=(\phi_j)_t\partial_z-(\phi_j)_z\partial_t.
\end{equation}
The vector field $X_{\phi_j}$ is divergence-free and compactly supported in
$D_j$.

From the first identity in \cref{eq:weighted-bulk} we have
\begin{equation}
0
 =\int_{D_j}zL_0h\,dV
 =-\delta\int_{D_j}zX_{\phi_j}h\,dV.
\end{equation}
Integrating by parts (using that $\mathrm{div}\,X_{\phi_j}=0$), using that $X_{\phi_j}z=(\phi_j)_t$ and then integrating by parts again, we obtain
\begin{equation} \label{eq:t-moment}
 0
 =\int_{D_j}zX_{\phi_j}h\,dV
=-\int_{D_j}hX_{\phi_j}z\,dV
=-\int_{D_j}h(\phi_j)_t\,dV
 =\int_{D_j}\phi_jh_t\,dV.
\end{equation}
Define the positive mass 
\begin{equation}\label{eq:masses}
 M_j:=\int_{D_j}\phi_j\,dV>0.
\end{equation}  
The probability
measures $M_j^{-1}\phi_jdV$ have supports shrinking to the origin.  Continuity
of $h_t$ therefore gives
\begin{equation}\label{eq:ht-zero}
 h_t(0)
 =\lim_{j\to\infty}\frac1{M_j}
   \int_{D_j}\phi_jh_t\,dV
 =0.
\end{equation}

We now carry out a similar calculation, starting with the second identity in \cref{eq:weighted-bulk}. Note first that
\begin{equation}\label{eq:Xw}
 X_{\phi_j}w
 =\frac{\ii}{2}\bar z(\phi_j)_t-(\phi_j)_z.
\end{equation}
Imitating the calculation in the previous paragraph therefore gives
\begin{align}
 0
 &=\int_{D_j}hX_{\phi_j}w\,dV
 \notag\\
 &=\int_{D_j}h
   \left(\frac{\ii}{2}\bar z(\phi_j)_t-(\phi_j)_z\right)dV
 \notag\\
 &=\int_{D_j}\phi_j
   \left(h_z-\frac{\ii}{2}\bar z h_t\right)dV.
 \label{eq:z-moment}
\end{align}
After division by $M_j$, the first term tends to $h_z(0)$.  The second tends to
zero because $h_t$ is bounded and
$\sup_{D_j}|z|\to0$.  Hence
\begin{equation}\label{eq:hz-zero}
 h_z(0)=0.
\end{equation}
Finally, since $\phi_t(0)=\phi_{z}(0)=0$ and $z(0)=0$, we have $L_\phi|_0= L_0|_0=\partial_{\bar z}$.  Thus $L_{\phi}h|_0 =0$ gives $h_{\bar z}(0)=0$.  Together with \cref{eq:ht-zero} and
\cref{eq:hz-zero}, this proves \cref{eq:dh-zero}.
\end{proof}

\begin{proof}[Proof of \cref{thm:main}]
By \cref{prop:closed-section} and \cref{lem:smallness} we may choose $j_0$ large enough such that the support of the smooth function $\phi$ given in \cref{eq:bumps} is contained in the
prescribed neighborhood $U$, and then choose $\delta$ small enough to guarantee the required
$C^2$ bound and strong pseudoconvexity.  \cref{prop:closed-section}
then gives the required nowhere-zero closed canonical section $\Omega_{\phi}$, and \cref{prop:derivative-killing} shows that every $C^1$ CR function has zero
differential at the origin. It follows immediately that $T_{\phi}^{0,1}$ is not locally CR embeddable at the origin since any local CR embedding into $\mathbb{C}^2$ would have to have zero differential at the origin (its two components being $C^1$ CR functions) and would therefore fail to be an embedding. This proves \cref{thm:main}.
\end{proof}

\section{Globalizing to $S^3$}\label{sec:globalization}

\begin{proof}[Proof of \cref{thm:globalization}]
Regard $S^3$ as the unit sphere in $\C^2$, with complex coordinates
$(z_1,z_2)$, and let $\iota \colon S^3 \to \C^2$ denote the inclusion. The complex $2$-form $\Omega_S=\iota^*(dz_1\wedge dz_2)$ is a global nowhere-zero closed section of the canonical bundle for the standard spherical CR structure on $S^3$. Choose antipodal points $q,p\in S^3$ and a Cayley
transform
\begin{equation*}
 \Psi:\C\times\R\longrightarrow S^3\setminus\{q\}
\end{equation*}
that carries the origin to $p$ and identifies the standard spherical CR
structure with the Heisenberg CR structure. Recall that
\begin{equation*}
 \Omega_0:=\iota_{L_0}\nu=-dz\wedge dw.
\end{equation*}
Since $\Psi^*\Omega_S$ and $\Omega_0$ are nowhere-zero sections of the same
complex line bundle, there is a smooth nowhere-zero complex-valued function
$g$ on $\C\times\R$ such that
\begin{equation}\label{eq:spherical-multiplier}
 \Psi^*\Omega_S=g\Omega_0.
\end{equation}
Both sides of \cref{eq:spherical-multiplier} are closed. Hence
\begin{equation*}
 0=d(g\Omega_0)=dg\wedge\Omega_0=(L_0g)\nu,
\end{equation*}
so that
\begin{equation}\label{eq:g-CR}
 L_0g=0.
\end{equation}

Choose the function $\phi$ in \cref{eq:bumps} with support in a relatively
compact neighborhood of the origin in the Cayley chart. The compactly supported one-form
$\phi\,d\bar z$ defines a compactly supported
one-form $\beta$ on $S^3\setminus\{q\}$ by $\Psi^*\beta = \phi\,d\bar z$. The one-form $\beta$ extends by zero to a one-form on $S^3$, which we also denote by $\beta$. Set
\begin{equation}\label{eq:global-closed-form}
\widehat\Omega=\Omega_S-d\beta.
\end{equation}
The form $\widehat\Omega$ is globally defined and closed. In the Cayley chart,
\cref{prop:closed-section,eq:spherical-multiplier} give
\begin{equation}\label{eq:global-generator}
 \Psi^*\widehat\Omega
 =g\Omega_0-d(\phi\,d\bar z)
 =\iota_{gL_0+X_\phi}\nu.
\end{equation}
For $\|\phi\|_{C^2}$ sufficiently small, $\widehat\Omega$ is nowhere zero and
its kernel defines a strongly pseudoconvex CR structure
$\widehat T^{0,1}$ on $S^3$. Indeed, in the Cayley chart this line field is
spanned by
\begin{equation*}
 gL_0+X_\phi
 =g\bigl(L_0+g^{-1}X_\phi\bigr),
\end{equation*}
and since $g^{-1}$ and its first derivatives are bounded on the fixed compact support of $\phi$, the corresponding CR structure is therefore a $C^1$-small perturbation of the Heisenberg CR structure when $\|\phi\|_{C^2}$ is small. Outside
the support of $\phi$ it is the standard spherical CR structure. Thus, by
\cref{lem:smallness}, the perturbation can be taken to be arbitrarily small in
the $C^1$ norm.

It remains to prove nonembeddability at $p$. Let $h$ be a $C^1$ CR function
defined near $p$, and denote its pullback to the Cayley chart again by $h$.
Then
\begin{equation}\label{eq:global-CR-equation}
 (gL_0+X_\phi)h=0.
\end{equation}
For all sufficiently large $j$, the solid torus $D_j$ lies in the domain of
$h$. On the set $w^{-1}(G)$ used in the proof of
\cref{cor:weighted-bulk}, one has $X_\phi=0$, and therefore
$L_0h=0$ there. In view of \cref{eq:g-CR,eq:flat-first-integrals}, the two
functions
\begin{equation*}
 gzh,\qquad gwh
\end{equation*}
are also annihilated by $L_0$ on $w^{-1}(G)$. Applying
\cref{lem:fiber-period} to these two functions gives
\begin{equation}\label{eq:global-weighted-bulk}
 0=\int_{D_j}L_0(gzh)\,dV
   =\int_{D_j}gzL_0h\,dV,
 \qquad
 0=\int_{D_j}L_0(gwh)\,dV
   =\int_{D_j}gwL_0h\,dV.
\end{equation}
On $D_j$ only the summand $\delta\phi_j$ is nonzero, so
\cref{eq:global-CR-equation} reads
\begin{equation*}
 gL_0h=-\delta X_{\phi_j}h.
\end{equation*}
Consequently, \cref{eq:global-weighted-bulk} implies
\begin{equation*}
 \int_{D_j}zX_{\phi_j}h\,dV=0
 \qquad \text{and} \qquad 
 \int_{D_j}wX_{\phi_j}h\,dV=0.
\end{equation*}
The integration-by-parts arguments in the proof of
\cref{prop:derivative-killing} now apply without change and yield
\begin{equation*}
 \int_{D_j}\phi_jh_t\,dV=0
 \qquad \text{and} \qquad 
 \int_{D_j}\phi_j
 \left(h_z-\frac{\ii}{2}\bar z h_t\right)dV=0.
\end{equation*}
Dividing by the masses $M_j$ in \cref{eq:masses} and letting $j\to\infty$
gives
\begin{equation*}
 h_t(0)=h_z(0)=0.
\end{equation*}
Finally, $X_\phi$ vanishes at the origin and $g(0)\ne0$, so
\cref{eq:global-CR-equation} also gives $h_{\bar z}(0)=0$. Thus every $C^1$ CR
function near $p$ has zero differential at $p$. It follows that
$\widehat T^{0,1}$ is not locally embeddable at $p$, completing the proof.
\end{proof}


\begin{thebibliography}{99}

\bibitem{AlekseevskyGanjiSchmalz2018}
D.~V.~Alekseevsky, M.~Ganji, and G.~Schmalz,
\emph{CR-geometry and shearfree Lorentzian geometry},
in \emph{Geometric Complex Analysis}, Springer Proc. Math. Stat. \textbf{246},
Springer, Singapore, 2018, 11--22;
\href{https://doi.org/10.1007/978-981-13-1672-2_2}
{doi:10.1007/978-981-13-1672-2\_2}.

\bibitem{BiquardHerzlich2005}
O.~Biquard and M.~Herzlich,
\emph{A Burns--Epstein invariant for ACHE 4-manifolds},
Duke Math.\ J.\ \textbf{126} (2005), 53--100;
\href{https://doi.org/10.1215/S0012-7094-04-12612-0}
{doi:10.1215/S0012-7094-04-12612-0};
\href{https://arxiv.org/abs/math/0111218}{arXiv:math/0111218}.

\bibitem{Bryant2004}
R.~L.~Bryant,
\emph{Real hypersurfaces in unimodular complex surfaces},
preprint (2004);
\href{https://arxiv.org/abs/math/0407472}{arXiv:math/0407472}.

\bibitem{CaseYang2013}
J.~S.~Case and P.~C.~Yang,
\emph{A Paneitz-Type Operator for CR Pluriharmonic Functions},
Bull.\ Inst.\ Math.\ Acad.\ Sin.\ (N.S.) \textbf{8} (2013), 285--322;
\href{https://arxiv.org/abs/1309.2528}{arXiv:1309.2528}.

\bibitem{ChengYau1980}
S.-Y.~Cheng and S.-T.~Yau,
\emph{On the existence of a complete K\"ahler metric on non-compact
complex manifolds and the regularity of Fefferman's equation},
Comm. Pure Appl. Math. \textbf{33} (1980), 507--544;
\href{https://doi.org/10.1002/cpa.3160330404}
{doi:10.1002/cpa.3160330404}.

\bibitem{Cho1998}
S.~Cho,
\emph{Extension of CR structures on three dimensional pseudoconvex CR
manifolds},
Nagoya Math. J. \textbf{152} (1998), 97--129.
\href{https://doi.org/10.1017/S0027763000006814}
{doi:10.1017/S0027763000006814}.

\bibitem{Farris1986}
F.~A.~Farris,
\emph{An intrinsic construction of Fefferman's CR metric},
Pacific J. Math. \textbf{123} (1986), 33--45;
\href{https://doi.org/10.2140/pjm.1986.123.33}
{doi:10.2140/pjm.1986.123.33}.

\bibitem{Fefferman1976}
C.~L.~Fefferman,
\emph{Monge--Amp\`ere equations, the Bergman kernel, and geometry of
pseudoconvex domains},
Ann. of Math. (2) \textbf{103} (1976), 395--416;
\href{https://doi.org/10.2307/1970945}
{doi:10.2307/1970945};
correction, Ann. of Math. (2) \textbf{104} (1976), 393--394;
\href{https://doi.org/10.2307/1970961}
{doi:10.2307/1970961}.

\bibitem{GoldbergSachs1962}
J.~N.~Goldberg and R.~K.~Sachs,
\emph{A theorem on Petrov types},
Acta Phys.\ Polon.\ (Suppl.) \textbf{22}  (1962), 13--23.

\bibitem{GoldbergSachs2009}
J.~N.~Goldberg and R.~K.~Sachs,
\emph{Republication of: A theorem on Petrov types},
Gen. Relativity Gravitation \textbf{41} (2009), 433--444;
\href{https://doi.org/10.1007/s10714-008-0722-5}
{doi:10.1007/s10714-008-0722-5}.

\bibitem{HillLewandowskiNurowski2008}
C.~D.~Hill, J.~Lewandowski, and P.~Nurowski,
\emph{Einstein's equations and the embedding of 3-dimensional CR manifolds},
Indiana Univ.\ Math.\ J.\ \textbf{57} (2008), 3131--3176;
\href{https://doi.org/10.1512/iumj.2008.57.3473}
{doi:10.1512/iumj.2008.57.3473};
\href{https://arxiv.org/abs/0709.3660}{arXiv:0709.3660}.

\bibitem{HillNacinovich2013}
C.~D.~Hill and M.~Nacinovich,
\emph{Non completely solvable systems of complex first order PDEs},
Rend. Semin. Mat. Univ. Padova \textbf{129} (2013), 129--170.
\href{https://doi.org/10.4171/RSMUP/129-9}
{doi:10.4171/RSMUP/129-9};
\href{https://arxiv.org/abs/1111.1502}{arXiv:1111.1502}

\bibitem{Hirachi2013}
K.~Hirachi,
\emph{$Q$-prime curvature on CR manifolds},
Differential Geom. Appl. \textbf{33} (2014), suppl., 213--245;
\href{https://doi.org/10.1016/j.difgeo.2013.10.013}
{doi:10.1016/j.difgeo.2013.10.013};
\href{https://arxiv.org/abs/1302.0489}{arXiv:1302.0489}.

\bibitem{Jacobowitz1987}
H.~Jacobowitz,
\emph{The canonical bundle and realizable CR hypersurfaces},
Pacific J.\ Math.\ \textbf{127} (1987), 91--101;
\href{https://doi.org/10.2140/pjm.1987.127.91}
{doi:10.2140/pjm.1987.127.91}.

\bibitem{Jacobowitz2020}
H.~Jacobowitz,
\emph{A conjecture of Trautman},
Lect.\ Notes Semin.\ Interdiscip.\ Mat.\ \textbf{15} (2020), 33--43;
\href{https://arxiv.org/abs/1902.00559}{arXiv:1902.00559}.

\bibitem{JacobowitzTreves1982}
H.~Jacobowitz and F.~Tr\`eves,
\emph{Non-realizable CR structures},
Invent.\ Math.\ \textbf{66} (1982), 231--249;
\href{https://doi.org/10.1007/BF01389393}{doi:10.1007/BF01389393}.

\bibitem{Kiremidjian1979}
G.~K.~Kiremidjian,
\emph{A direct extension method for CR structures},
Math. Ann. \textbf{242} (1979), 1--19;
\href{https://doi.org/10.1007/BF01420478}
{doi:10.1007/BF01420478}.

\bibitem{Lee1988}
J.~M.~Lee,
\emph{Pseudo-Einstein structures on CR manifolds},
Amer.\ J.\ Math.\ \textbf{110} (1988), 157--178;
\href{https://doi.org/10.2307/2374543}{doi:10.2307/2374543}.

\bibitem{LewandowskiNurowskiTafel1990}
J.~Lewandowski, P.~Nurowski, and J.~Tafel,
\emph{Einstein's equations and realizability of CR manifolds},
Class.\ Quantum Grav.\ \textbf{7} (1990), L241--L246.

\bibitem{LewandowskiNurowskiTafel1991}
J.~Lewandowski, P.~Nurowski, and J.~Tafel,
\emph{Algebraically special solutions of the Einstein equations with pure radiation fields},
Class.\ Quantum Grav.\ \textbf{8} (1991), 493--501;
\href{https://doi.org/10.1088/0264-9381/8/3/009}
{doi:10.1088/0264-9381/8/3/009}.

\bibitem{Lewy1957}
H.~Lewy,
\emph{An example of a smooth linear partial differential equation without
solution},
Ann.\ of Math.\ (2) \textbf{66} (1957), 155--158;
\href{https://doi.org/10.2307/1970121}{doi:10.2307/1970121}.

\bibitem{Nirenberg1974}
L.~Nirenberg,
\emph{On a question of Hans Lewy},
Russian Math.\ Surveys \textbf{29} (1974), 251--262.

\bibitem{Nirenberg1975}
L.~Nirenberg,
\emph{On a problem of Hans Lewy},
in \emph{Fourier Integral Operators and Partial Differential Equations},
Lecture Notes in Math.\ \textbf{459}, Springer, Berlin, 1975, 224--234;
\href{https://doi.org/10.1007/BFb0074196}{doi:10.1007/BFb0074196}.

\bibitem{Nurowski1996}
P.~Nurowski,
\emph{Optical geometries and related structures},
J.\ Geom.\ Phys.\ \textbf{18} (1996), 335--348;
\href{https://doi.org/10.1016/0393-0440(95)00012-7}
{doi:10.1016/0393-0440(95)00012-7}.

\bibitem{Robinson1959}
I.~Robinson,
\emph{A solution of the Maxwell--Einstein equations},
Bull.\ Acad.\ Polon.\ Sci.\ S\'er.\ Sci.\ Math.\ Astronom.\ Phys.\ \textbf{7} (1959),
351--352.

\bibitem{Robinson1961}
I.~Robinson,
\emph{Null electromagnetic fields},
J.\ Math.\ Phys.\ \textbf{2} (1961), 290--291;
\href{https://doi.org/10.1063/1.1703712}{doi:10.1063/1.1703712}.

\bibitem{RobinsonTrautman1960}
I.~Robinson and A.~Trautman,
\emph{Spherical gravitational waves},
Phys.\ Rev.\ Lett.\ \textbf{4} (1960), 431--432;
\href{https://doi.org/10.1103/PhysRevLett.4.431}{doi:10.1103/PhysRevLett.4.431}.

\bibitem{RobinsonTrautman1962}
I.~Robinson and A.~Trautman,
\emph{Some spherical gravitational waves in general relativity},
Proc.\ Roy.\ Soc.\ London Ser.\ A \textbf{265} (1962), 463--473;
\href{https://doi.org/10.1098/rspa.1962.0036}{doi:10.1098/rspa.1962.0036}.

\bibitem{RobinsonTrautman1986}
I.~Robinson and A.~Trautman,
\emph{Cauchy--Riemann structures in optical geometry},
in \emph{Proceedings of the Fourth Marcel Grossmann Meeting on General
Relativity}, 1986, 317--324.

\bibitem{Rosay1989}
J.-P.~Rosay,
\emph{New examples of non-locally embeddable CR structures
(with no non-constant CR distributions)},
Ann.\ Inst.\ Fourier (Grenoble) \textbf{39} (1989), 811--823;
\href{https://doi.org/10.5802/aif.1189}{doi:10.5802/aif.1189}.

\bibitem{SchmalzGanji2019}
 G.~Schmalz and M.~Ganji,
\emph{A criterion for local embeddability of three-dimensional CR structures},
Ann.\ Mat.\ Pura Appl.\ (4) \textbf{198} (2019), 491--503;
\href{https://doi.org/10.1007/s10231-018-0785-1}
{doi:10.1007/s10231-018-0785-1};
\href{https://arxiv.org/abs/1803.01535}{arXiv:1803.01535}.

\bibitem{Tafel1985}
J.~Tafel,
\emph{On the Robinson theorem and shearfree geodesic null congruences},
Lett.\ Math.\ Phys.\ \textbf{10} (1985), 33--39;
\href{https://doi.org/10.1007/BF00704584}{doi:10.1007/BF00704584}.

\bibitem{TaghaviChabert2026}
A.~Taghavi-Chabert,
\emph{Perturbations of Fefferman spaces over CR three-manifolds},
Trans.\ Amer.\ Math.\ Soc.\ \textbf{379} (2026), no.~2, 923--978;
\href{https://doi.org/10.1090/tran/9498}
{doi:10.1090/tran/9498};
\href{https://arxiv.org/abs/2303.07328}{arXiv:2303.07328}.

\bibitem{Trautman1999}
A.~Trautman,
\emph{On complex structures in physics},
in \emph{On Einstein's Path}, Springer, New York, 1999, 487--501;
\href{https://doi.org/10.1007/978-1-4612-1422-9_34}
{doi:10.1007/978-1-4612-1422-9\_34};
\href{https://arxiv.org/abs/math-ph/9809022}{arXiv:math-ph/9809022}.

\bibitem{Trautman2002}
A.~Trautman,
\emph{Robinson manifolds and Cauchy--Riemann spaces},
Class.\ Quantum Grav.\ \textbf{19} (2002), R1--R10;
\href{https://doi.org/10.1088/0264-9381/19/2/201}
{doi:10.1088/0264-9381/19/2/201}.

\end{thebibliography}
\end{document}